
%
%

\documentclass[11pt,a4paper,reqno]{amsart}
\usepackage{float}
\usepackage{amsfonts,amsmath,graphics,epsfig,latexsym,times}
\usepackage[T1]{fontenc}
\newif\ifpix \pixtrue

\addtolength{\textwidth}{2.8cm}
\addtolength{\oddsidemargin}{-1.4cm}
\addtolength{\evensidemargin}{-1.4cm}

\usepackage[colorlinks=true]{hyperref}
\hypersetup{ pdfstartview=FitH }


\numberwithin{equation}{section}


%
%
\numberwithin{equation}{section}
%
%
\newcommand{\vol}{\mathrm{vol}}
\newcommand{\ghyp}{g_\mathrm{hyp}}
\newcommand{\spinc}{\ensuremath{\mathrm{Spin}^c}}
  
\def\ZZ{{\mathbb{Z}}}     \def\CC{{\mathbb{C}}}
\def\RR{{\mathbb{R}}}

\def\cV{{\mathcal{V}}}
\def\cM{{\mathcal{M}}}

\renewcommand{\epsilon}{\varepsilon}

\newcommand{\gs}{\mathfrak{s}}

\newcommand{\del}{\partial}

\newcommand{\Xbar}{{\overline{X}}}

\newtheorem{prop}[subsection]{Proposition}

\newtheorem{theointro}{Theorem}

\newtheorem{corintro}[theointro]{Corollary}

\theoremstyle{definition}

\newtheorem{rmk}[subsection]{Remark}

\newtheorem*{rmk*}{Remark}
\newtheorem*{rmks*}{Remarks}
\newtheorem{rmks}[subsection]{Remarks}

\newenvironment{remark*}{\begin{rmk*} --- \normalfont} { \end{rmk*} }
\newenvironment{remarks*}{\begin{rmks*} \begin{enumerate} \normalfont}
{\end{enumerate} \end{rmks*} }


\title[A remark on the Herzlich volume of ACH Einstein
manifolds]{A remark on the Herzlich volume of asymptotically complex
  hyperbolic  Einstein
  manifolds}

\author{Yann Rollin}
\address{Yann Rollin, Imperial College, London, UK}
\email{rollin@imperial.ac.uk}
\thanks{The author is  supported by a University Research
  Fellowship of the Royal Society}

 \subjclass{Primary 32Q20, 53C25} 
\keywords{Renormalized volume, asymptotically complex hyperbolic
  metrics, selfdual Einstein, K\"ahler-Einstein, Seiberg-Witten theory, monopole class}

\begin{document}
{\Huge \sc \bf\maketitle}
\begin{abstract}
We observe inequalities involving  the Herzlich 
volume of  a $4$-dimensional asymptotically complex hyperbolic
 Einstein manifold and its  Euler
characteristic provided the metrics is either K\"ahler or selfdual. In the selfdual case we have to assume
furthermore that
 the Kronheimer-Mrowka invariant is non vanishing.
\end{abstract}

\section{Statement of results}
In this short note, we observe inequalities involving the Herzlich 
volume of  $4$-dimensional \emph{asymptotically complex hyperbolic Einstein} manifold  $(X,g)$ and the Euler
characteristic $\chi(X)$,  provided the metric is either
\emph{K\"ahler} or \emph{selfdual}. In the selfdual case we have to
assume  that  the Kronheimer-Mrowka invariant is moreover non vanishing.

The acronym \emph{ACH}  shall be used as a shorthand in
 the rest of this paper for \emph{asymptotically
 complex hyperbolic}. Our main result is stated next, however the reader yearning for
basic definitions  may want to read Section~\ref{sec:def} first.  

Given   an orientable $4$-dimensional ACH
Einstein manifold $(X,g)$, Herzlich introduces in~\cite{Her07} an
invariant of the metric $\cV(g)$, closely
related to the \emph{renormalized volume}  $V(g)$. An essential
feature of
this invariant proved by Herzlich,  is  the Gauss-Bonnet type formula
\begin{equation}
  \label{eq:gb}
\chi(X)=\frac 1{8\pi^2}\int_X\left ( |W_g|^2 -\frac 1{24}s_g^2\right)
\vol_g +\frac 1{4\pi^2}\cV(g), 
\end{equation}
where $W_g$ denotes the Weyl curvature of $g$ and $s_g$ the scalar curvature.

\begin{theointro}
\label{theo:main}
  Let $\Xbar$ be a $4$-dimensional oriented manifold  with boundary,
 such that its 
  interior $X$ carries an  ACH Einstein
  metric $g$.

If $g$ is K\"ahler with respect to a complex structure
  compatible with the 
orientation of $\Xbar$, we have
\begin{equation}
\label{eq:involk}
{4\pi^2}\chi(X)\geq \cV(g)
\end{equation}
where $\cV(g)$ is the  Herzlich volume of $g$.
In addition  equality holds if and only if the metric  is complex hyperbolic.

If    $g$ is selfdual and
 $(\Xbar,\xi)$ admits a monopole class, where $\xi$ be the (positive cooriented) contact structure on $\del\Xbar$ induced by the conformal
infinity of $g$, then 
\begin{equation}
\label{eq:involsd}
{4\pi^2}\chi(X)\leq \cV(g)
\end{equation}
and equality holds if and only if the metric  is complex hyperbolic.
\end{theointro}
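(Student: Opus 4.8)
The plan is to derive both inequalities from the Gauss--Bonnet type formula \eqref{eq:gb}, which I would first rewrite as
\[
4\pi^2\chi(X)-\cV(g)=\tfrac12\int_X\Bigl(|W_g|^2-\tfrac1{24}s_g^2\Bigr)\vol_g,
\]
and then decompose $W_g=W_g^{+}+W_g^{-}$ with respect to the orientation of $\Xbar$, so that $|W_g|^2=|W_g^{+}|^2+|W_g^{-}|^2$. Note that although $\int_X s_g^2\,\vol_g$ diverges, it is exactly the renormalization encoded in $\cV(g)$ that makes the displayed combination a convergent integral, so all the manipulations below take place with finite quantities.

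For the K\"ahler case I would invoke the classical pointwise identity for a K\"ahler metric on an oriented $4$--manifold whose complex structure induces the given orientation: the operator $W_g^{+}$ has eigenvalues $\tfrac{s_g}{6},-\tfrac{s_g}{12},-\tfrac{s_g}{12}$ on $\Lambda^{+}$, hence $|W_g^{+}|^2=\tfrac1{24}s_g^2$ identically. The integrand above then reduces to $|W_g^{-}|^2\ge 0$, which proves \eqref{eq:involk}. Equality forces $W_g^{-}\equiv 0$; since a K\"ahler--Einstein $4$--manifold with vanishing anti-self-dual Weyl tensor has vanishing Bochner tensor and therefore constant holomorphic sectional curvature, the ACH condition (in particular $s_g<0$) identifies $(X,g)$ with complex hyperbolic space $\CHH$. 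Conversely $\CHH$, being K\"ahler--Einstein and selfdual, realizes equality.

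For the selfdual case one has $W_g^{-}\equiv 0$, so the integrand is $|W_g^{+}|^2-\tfrac1{24}s_g^2$ and \eqref{eq:involsd} is equivalent to the curvature inequality
\[
\int_X\Bigl(\tfrac1{24}s_g^2-|W_g^{+}|^2\Bigr)\vol_g\ \ge\ 0 .
\]
Here the hypothesis that $(\Xbar,\xi)$ admits a monopole class is essential. The plan is to use the non-vanishing of the Kronheimer--Mrowka invariant of $\xi$ to produce solutions $(\Phi,A)$ of the (suitably perturbed) Seiberg--Witten equations on the ACH manifold $(X,g)$ for a $\spinc$ structure whose first Chern class is a monopole class, with asymptotic behaviour near $\del\Xbar$ dictated by $\xi$ and by the complex hyperbolic model. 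Integrating the Lichnerowicz--Weitzenb\"ock formula for $D_A\Phi=0$ over $X$ --- the ACH decay of $g$ and of $(\Phi,A)$ ensuring that no boundary term survives at infinity --- and then running LeBrun's refined estimate, in which the Weitzenb\"ock formula on $\Lambda^{+}$ is applied to the self-dual $2$--form $\sigma(\Phi)$ and the action of $W_g^{+}$ on $\RR\cdot\sigma(\Phi)$ is controlled, I expect to obtain exactly the displayed inequality. In the equality case the same argument should force $\nabla_A\Phi=0$ with $|\Phi|$ constant, $F_A^{+}$ a nonzero self-dual harmonic form of constant length, and $W_g^{+}$ to act as a constant multiple of the identity on $\RR\cdot\sigma(\Phi)$; together with the Einstein condition this makes $g$ K\"ahler, hence --- being selfdual and K\"ahler--Einstein --- complex hyperbolic, and $\CHH$ itself saturates the inequality since there $|W_g^{+}|^2=\tfrac1{24}s_g^2$.

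The main obstacle is entirely analytic and lies in the selfdual case: one must set up Seiberg--Witten theory on the non-compact ACH manifold $(X,g)$, extract solutions from the non-vanishing Kronheimer--Mrowka contact invariant, and --- most delicately --- control their asymptotics precisely enough that every integration by parts in both the basic and the refined Weitzenb\"ock identities produces no boundary contribution at infinity, or only contributions already absorbed into $\cV(g)$ through \eqref{eq:gb}. Granting this non-compact Seiberg--Witten package, the curvature estimate and the accompanying rigidity statement follow just as in LeBrun's compact theory.
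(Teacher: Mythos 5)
Your argument follows the paper's proof essentially verbatim: the K\"ahler case is the pointwise identity $|W_g^{+}|^2=\tfrac1{24}s_g^2$ combined with \eqref{eq:gb}, and the selfdual case rests on the curvature estimate $\int_X(\tfrac1{24}s_g^2-|W_g^{+}|^2)\vol_g\ge 0$ (with equality iff K\"ahler--Einstein) obtained from Seiberg--Witten solutions produced by the monopole class. The ``non-compact Seiberg--Witten package'' you correctly flag as the main obstacle is exactly the content the paper imports from \cite{Rol04} (Th\'eor\`eme 2, Proposition 29 and Corollaire 31), so your proposal matches the paper's route, with that analytic input taken on faith rather than cited.
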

\begin{rmk}
  The above result shows a clear disjunction between the selfdual
  Einstein   metrics~\eqref{eq:involsd} and the K\"ahler Einstein
  metrics~\eqref{eq:involk}. 
One can compare Theorem~\ref{theo:main} to a similar and yet dramatically
different result 
due to Anderson in the real case: given a $4$-dimensional orientable \emph{asymptotically
real hyperbolic} (ARH) Einstein manifolds $(X,g)$, we have 
$$V(g)\leq
\frac {4\pi^2}3\chi(X),$$
 where $V(g)$ is the renormalized volume of
$g$. Moreover, equality holds if and only if the 
metric is real hyperbolic. The
proof follows immediately  from a
Gauss-Bonnet formula similar to~\eqref{eq:gb} in the real setting
(cf.~\cite{And01} for more details). However, the inequality holds for
every ARH Einstein metric and no disjunction phenomenon appears in
this case. 
  \end{rmk}
 \subsection*{ Acknowledgments }
The author wishes to thank Marc Herzlich for some useful comments.
\section{Definitions}
\label{sec:def}
  Let $\Xbar^4$ be an oriented  manifold with oriented 
boundary $Y^3$ and interior
  $X\subset \Xbar$ 
  endowed with an ACH Einstein
  metric $g$.

Recall that   a \emph{cooriented contact distribution}  $\xi\subset TY$ is
given as the kernel 
  of a globally defined $1$-form $\eta$, defined up to multiplication
  by a positive
  function, such that $\eta\wedge d\eta$ 
  is a volume form. The contact structure is \emph{positive} if this volume
  form is compatible with the orientation of $Y$. 

A \emph{strictly pseudoconvex} CR structure $J$ is given by an almost complex
structure $J$ defined along a positive cooriented  contact structure
$\xi=\ker \eta$ such that $\gamma(\cdot,\cdot)=d\eta(\cdot,J\cdot)$
is a   Hermitian  metric defined along $\xi$.  
With the above notations,
 $\xi=\ker\eta$ is refered to as the (positive cooriented) contact
 structure  \emph{induced} by $J$.

Identify a collar neighborhood of $Y$ in $\Xbar$ with $[0,T)\times Y$, with
coordinate $u$ on the first factor.
 On this collar
neighborhood we have a model metric
$$ g_0 = \frac{du^2+\eta^2}{u^2} + \frac{\gamma}{u} $$
We say that a metric Riemannian $g$ on $X$ is ACH, with \emph{conformal
infinity} the strictly pseudoconvex CR structure $J$,
if near the boundary one has
\begin{equation}
 g = g_0 + \kappa ,\label{eq:10}
\end{equation}
where $\kappa$ is a symmetric 2-tensor, such that $|\nabla^k\kappa| =
O(u^\delta)$ for some $\delta>0$ and all $k\geq 0$ (here, all the
norms and derivatives are taken with respect to the metric $g_0$).

Kronheimer and Mrowka introduced in \cite{KroMro97} a suitable version
of Seiberg-Witten theory for compact oriented manifolds with  positive
cooriented contact
boundary. Therefore the theory applies to $(\Xbar,\xi)$.
The contact structure $\xi$ induces a canonical \spinc-structure on $Y$
denoted  $\gs_\xi$.
Recall that the space $\spinc(\Xbar,\xi)$ is the set of equivalence
classes of pairs $(\gs,h)$, where $\gs$ is a \spinc-structure on
$\Xbar$ and $h:\gs|_{Y}\to\gs_\xi$ is an isomorphism.
Then, the Kronheimer-Mrowka invariant is given by a map
$$
sw_{\Xbar,\xi} :\spinc(\Xbar,\xi) \to \ZZ
$$
defined up to an overall sign, and obtained by ``counting'' the
solutions of Seiberg-Witten equations. If $sw_{\Xbar,\xi}(\gs,h)\neq
0$ for some $(\gs,h)\in\spinc(\Xbar,\xi)$, we say that $(\gs,h)$ is a \emph{monopole class}.

\section{Examples of K\"ahler and selfdual ACH Einstein metrics}

The only known examples of selfdual ACH Einstein metric at the moment consist
of
\begin{enumerate}
\item  \label{ex:h} the complex hyperbolic plane,
and, more generally,  ACH
manifolds obtained by taking suitable quotients of the complex
hyperbolic plane by a group of isometries,
\item \label{ex:ks}the Calderbank-Singer examples~\cite{CalSin04}, defined in a
  neighborhood of the
  exceptional divisor of the minimal resolution of certain isolated
  orbifold singularities.
\end{enumerate}
whereas the essential source of ACH K\"ahler-Einstein metrics in
addition to hyperbolic examples \eqref{ex:h}  is
 provided by the Cheng-Yau metrics on strictly pseudoconvex
domains of $\CC^2$ (and more generally of $\CC^n$). 
Other examples due to Biquard   are obtained by
deformation  
of the complex hyperbolic
metric \cite{Biq00,Biq05} in the selfdual Einstein and the
K\"ahler-Einstein cases. 
Finally  further ACH K\"ahler-Einstein examples are  obtained
by drilling wormholes in their conformal infinity~\cite{BiqRolX06}.

\begin{rmk}
\label{rmk:fill}
In each example given above, the boundary contact
structure $\xi$ induced by the conformal infinity of the metric  is
symplectically  
fillable. Therefore $(\Xbar,\xi)$ admits a monopole class by
\cite[Theorem 1.1]{KroMro97}. It follows that  Theorem \ref{theo:main}
applies.
\end{rmk}

We shall give now an application  of  Theorem~\ref{theo:main}, 
in the case of the ball. Consider the moduli space  
$$
\cM=\{\mbox{ACH Einstein metrics on $B^4$}\}/\{\mbox{diffeomorphisms
  of $B^4$}\}
$$
Let $\cM^\mathrm{K}\subset\cM$ be the subspace of K\"ahler metrics and
$\cM^\mathrm{SD}$ the subspace of selfdual metric.  Notice that
$\cM^\mathrm{K}$ contains all the Cheng-Yau metrics on strictly
pseudoconvex domains of $\CC^2$ diffeomorphic to a ball. 
  Let $\cM^\bullet_{{std}}$ be the 
  component of the moduli space $ \cM^\bullet$ consisting of
  metrics with conformal infinity inducing the standard 
  contact structure $\xi_{std}$ on the sphere $S^3$, up to
  diffeomorphism. Since contact structure are stable under small
  deformation, it follows that $\cM^\bullet_{{std}}$ is a
  neighborhood of $\ghyp$ in $\cM^\bullet$.

\begin{rmk} The contact structure $\xi$ induced by the conformal infinity
of every metric in $\cM^\mathrm{K}$ is automatically symplectically
fillable by the K\"ahler form. It follows by a theorem of Eliashberg
that $\xi$ is tight. Since $\xi_{std}$ is the only tight contact
structure on $S^3$, it turns out that
$\cM^\mathrm{K}_{std}=\cM^\mathrm{K}$.
By contrast, it is an open question whether
  $\cM^\mathrm{SD}_{{std}}$ is a  proper subset of $\cM^\mathrm{SD}$. 
\end{rmk}

\begin{rmk}
Small deformations of the conformal infinity of $\ghyp$ are in $1:1$
correspondence with small ACH Einstein deformations of the
metric~\cite{Biq00}. Moreover the moduli spaces $\cM^\mathrm{K}$ and
$\cM^\mathrm{SD}$ intersect transversely at $\ghyp$~\cite{Biq05}.
Corollary~\ref{cor:main}  says that in some sense
$\cV:\cM_{std}\to \RR$ admits a saddle point at $\ghyp$.
Thus,  there are certain
ACH Einstein deformations $g$ of the complex hyperbolic metric, which are
neither K\"ahler nor selfdual and such that $\cV(g)=\cV(\ghyp)$. More generally,
it would be interesting to understand precisely, how deformations of the
conformal infinity act on the Herzlich volume of the corresponding ACH
Einstein metric.
\end{rmk}

\begin{corintro} 
\label{cor:main}
The Herzlich volume $\cV:\cM^\mathrm{SD}_{{std}}\to \RR$
  is bounded below, and admits 
  a unique minimum at the complex hyperbolic metric
  $\cV(\ghyp)=4\pi^2$, whereas $\cV:\cM^\mathrm{K}\to \RR$
  is bounded above, and admits 
  a unique maximum at the complex hyperbolic metric.
\end{corintro}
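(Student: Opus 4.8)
The plan is to obtain Corollary~\ref{cor:main} as an essentially formal consequence of Theorem~\ref{theo:main} applied to $\Xbar=B^4$. The only additional inputs required are the elementary fact that $\chi(B^4)=1$, so that the left-hand side of both \eqref{eq:involk} and \eqref{eq:involsd} equals $4\pi^2$, and the observation that the complex hyperbolic metric $\ghyp$ on $B^4\cong\CHH$ is simultaneously K\"ahler--Einstein and selfdual Einstein for the \emph{same} (complex) orientation, so that it belongs to both families and saturates both \eqref{eq:involk} and \eqref{eq:involsd}. Feeding $g=\ghyp$ into either equality case of Theorem~\ref{theo:main} gives $\cV(\ghyp)=4\pi^2\chi(B^4)=4\pi^2$; alternatively this value can be read off directly from the Gauss--Bonnet formula~\eqref{eq:gb}.

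For the K\"ahler half I would argue as follows. Any $g\in\cM^\mathrm{K}$ is K\"ahler for some complex structure on $B^4$, and after composing with an orientation-reversing diffeomorphism if needed --- permissible since $\cM$ is a quotient by \emph{all} diffeomorphisms of $B^4$ --- we may take this complex structure to be orientation-compatible. Then \eqref{eq:involk} reads $\cV(g)\le 4\pi^2$, with equality exactly when $g$ is complex hyperbolic, i.e. $g=\ghyp$ in $\cM$; hence $\cV$ is bounded above on $\cM^\mathrm{K}$ and attains its maximum $4\pi^2$ only at $\ghyp$. For the selfdual half, note that for $g\in\cM^\mathrm{SD}_{std}$ the conformal infinity induces the standard contact structure $\xi_{std}$ on $S^3=\del B^4$, which bounds the standard symplectic ball and is therefore symplectically fillable; by \cite[Theorem~1.1]{KroMro97} the pair $(B^4,\xi_{std})$ admits a monopole class, so the hypotheses of the selfdual part of Theorem~\ref{theo:main} are met and \eqref{eq:involsd} gives $\cV(g)\ge 4\pi^2$, with equality exactly for $g=\ghyp$. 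Thus $\cV$ is bounded below on $\cM^\mathrm{SD}_{std}$ with unique minimum $4\pi^2$ at $\ghyp$.

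Since the substantive content is entirely carried by Theorem~\ref{theo:main}, the main (and rather mild) obstacle is the bookkeeping of orientations and moduli: one must verify that $\ghyp$ genuinely represents a point of both $\cM^\mathrm{K}$ and $\cM^\mathrm{SD}_{std}$ compatibly with the orientation convention fixed in defining $\cM^\mathrm{SD}$, and that ``complex hyperbolic up to diffeomorphism of $B^4$'' is a single point of $\cM$, so that the rigidity clauses of Theorem~\ref{theo:main} identify a \emph{unique} extremizer rather than a family. Once these are settled, no further analysis is needed: the corollary is a translation of Theorem~\ref{theo:main} together with the fillability of $\xi_{std}$ and the equality $\chi(B^4)=1$.
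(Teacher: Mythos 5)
Your proposal is correct and follows essentially the same route as the paper: apply Theorem~\ref{theo:main} to $\Xbar=B^4$ with $\chi(B^4)=1$, using the standard symplectic filling of $\xi_{std}$ together with \cite[Theorem~1.1]{KroMro97} to supply the monopole class needed for the selfdual case, and the equality clauses to identify $\ghyp$ as the unique extremizer with $\cV(\ghyp)=4\pi^2$. Your additional remarks on orientation conventions and on $\ghyp$ being a single point of the moduli space are sensible bookkeeping that the paper leaves implicit.
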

A similar version of the Corollary  holds 
manifolds obtained by hyperbolic quotient as in examples~\eqref{ex:h}.


\section{Proofs}
 \begin{proof}[Proof of Corollary~\ref{cor:main}]
Theorem~\ref{theo:main} applies automatically to every metric in
$\cM^\mathrm{K}$.

Notice that the standard symplectic form on $B^4$ is a filling of
the contact structure $\xi_{std}$.
It follows, according to Remark~\ref{rmk:fill}, that
Theorem~\ref{theo:main} applies to every metric in 
  $\cM^\mathrm{SD}_{{std}}$  as well, and the Corollary is proved.
 \end{proof}

 \begin{proof}[Proof of Theorem~\ref{theo:main}]
If the metric is K\"ahler, we have the pointwise identity
${s^2_g} = 24|W^+_g|^2$. 
Using the Gauss-Bonnet formula~\eqref{eq:gb}, we have
 $$0\geq \int_X \left ( \frac{s^2_g}{24} -|W_g|^2\right )\vol_g=2\cV(g)-8\pi^2\chi(X)$$
with equality if and only if the metric is selfdual and inequality
\eqref{eq:involk} is proved.

The inequality ~\eqref{eq:involsd} is way less trivial.
However the most difficult part of the proof is carried out
in~\cite{Rol04}. The reader may want to consult  a short note in
English as well, stating the result without the technicalities~\cite{Rol02CR}. 

Given any  oriented manifold $\Xbar^4$,  endowed
with 
an ACH Einstein metric $g$, let $\xi$ be the cooriented positive contact
structure induced by the conformal infinity. Assume that 
there exists a monopole class
class $(\gs,h)\in\spinc(\Xbar,\xi)$.  Then one
can construct a solution of Seiberg-Witten equations w.r.t $(\gs,h)$
and  $g$,   with a suitable
decay at the boundary (cf. \cite[Th\'eor\`eme 2]{Rol04}).
Once a solution of Seiberg-Witten equations is obtained, one can prove the following result (cf.  Proposition 29 and the end of the
proof of Corollaire 31 in \cite{Rol04}), 
\begin{prop}
 Let $\Xbar^4$ be an oriented manifold with boundary endowed with
an ACH Einstein metric and let $\xi$ be the contact structure induced
by the conformal infinity on the boundary.
If  $(\Xbar,\xi)$ admits a monopole
class then
$$0\leq \int_X \left (\frac{s^2_g}{24} -|W^+_g|^2\right )\vol_g$$
with equality if and only if the metric is K\"ahler-Einstein.
\end{prop}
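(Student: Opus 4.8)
The plan is to transplant LeBrun's Seiberg--Witten curvature estimate for compact Einstein four-manifolds with a monopole class to the present ACH setting, the essential analytic input being supplied by \cite{Rol04}. Since $(\Xbar,\xi)$ carries a monopole class $(\gs,h)$, \cite[Th\'eor\`eme 2]{Rol04} produces an irreducible solution $(A,\Phi)$ of the Seiberg--Witten equations $D_A\Phi=0$, $F_A^+=q(\Phi)$ for $(\gs,h)$ and the Einstein metric $g$, whose spinor part $\Phi$, together with $\nabla_A\Phi$, decays towards the conformal infinity at a controlled rate (the model being the standard parallel, nowhere-vanishing solution on $\CHH$); in particular $\Phi\not\equiv 0$. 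First I would record the two Weitzenb\"ock formulas in play: the Lichnerowicz formula for $D_A$, which after pairing with $\Phi$ and inserting the Seiberg--Witten equation yields a Bochner identity of the shape $\tfrac12\Delta|\Phi|^2+|\nabla_A\Phi|^2+\tfrac{s_g}{4}|\Phi|^2+c|\Phi|^4=0$ with $c>0$ a universal constant; and the Weitzenb\"ock formula on self-dual two-forms applied to $\phi:=F_A^+=q(\Phi)$, in which $W_g^+$ enters through the term $\langle W_g^+(\phi),\phi\rangle$. Combining these two identities in LeBrun's fashion --- using that $|\phi|^2$ is proportional to $|\Phi|^4$, a pointwise bound for $\langle W_g^+(\phi),\phi\rangle$ in terms of $|W_g^+|\,|\phi|^2$, and the fact that for the Einstein metric $g$ the scalar curvature $s_g$ is a constant and the traceless Ricci curvature vanishes --- produces a pointwise differential inequality whose integrated form is the assertion of the proposition.

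The next step, which is the genuine obstacle, is to integrate this differential inequality over the complete, infinite-volume manifold $X$. One must show that the divergence terms coming from the Bochner identities integrate to zero, that is, that the boundary integrals over the slices $\{u=\epsilon\}$ tend to $0$ as $\epsilon\to 0$. This is where one uses the prescribed decay of $\Phi$ and $\nabla_A\Phi$ and the ACH control on the curvature of $g$ --- note in particular that, since $|W^+|^2=s^2/24$ pointwise on the asymptotic model $\CHH$, the integrand $\tfrac{s_g^2}{24}-|W_g^+|^2$ decays at infinity, so that the integral $\int_X\bigl(\tfrac{s_g^2}{24}-|W_g^+|^2\bigr)\vol_g$ converges. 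Carrying out this boundary analysis rigorously is the technical heart of the matter, and is done in \cite{Rol04}; I would therefore simply invoke \cite[Proposition 29 and the end of the proof of Corollaire 31]{Rol04}. Once the integration is justified, the Einstein condition removes the traceless-Ricci contribution and what survives is precisely
\[
0\le \int_X\Bigl(\frac{s_g^2}{24}-|W_g^+|^2\Bigr)\vol_g .
\]

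It remains to analyze the equality case. Equality forces each intermediate inequality in the chain above to be sharp; in particular the Kato inequality for $\phi$ is saturated, which together with the Dirac equation $D_A\Phi=0$ forces $\nabla_A\Phi\equiv 0$, so that $\Phi$ is a parallel spinor for $A$; on the connected manifold $X$ its norm $|\Phi|^2$ is then a positive constant. A nowhere-vanishing parallel positive spinor determines an orthogonal, parallel, integrable almost complex structure $J$ compatible with the orientation, with K\"ahler form a constant multiple of $\phi=q(\Phi)$; hence $(g,J)$ is K\"ahler, and since $g$ is Einstein it is K\"ahler--Einstein. Conversely, on any K\"ahler surface with the complex orientation one has the pointwise identity $s_g^2=24|W_g^+|^2$, so the integrand vanishes identically and equality holds. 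Thus the only real difficulty in the argument is the justification of the boundary-term cancellation in the integration step, which is exactly the content of the results quoted from \cite{Rol04}.
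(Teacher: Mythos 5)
Your argument is correct and follows essentially the same route as the paper: the paper's proof of this proposition consists precisely of invoking \cite[Th\'eor\`eme 2]{Rol04} for the existence of a decaying Seiberg--Witten solution and then \cite[Proposition 29 and Corollaire 31]{Rol04} for the integrated Weitzenb\"ock estimate and its equality case, which is exactly what you do. Your additional sketch of the LeBrun-style Weitzenb\"ock argument and of the boundary-term analysis is a faithful account of what those cited results contain, so nothing essential is missing.
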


In particular, if the metric is selfdual, we have 
$$0\leq \int_X \left (\frac{s^2_g}{24} -|W_g|^2\right )\vol_g=2\cV(g)-8\pi^2\chi(X).$$
Moreover equality holds if and only if the metric is K\"ahler.
 This proves the inequality \eqref{eq:involsd}. 

Finally inequalities  \eqref{eq:involk} and  \eqref{eq:involsd}  are
saturated  if and only if the metric $g$ is both selfdual and
K\"ahler-Einstein, and therefore, complex hyperbolic.
\end{proof}

\bibliographystyle{plain}
\bibliography{$HOME/tex/biblio,$HOME/tex/rollin}

\begin{thebibliography}{1}

\bibitem{And01}
Michael~T. Anderson.
\newblock {$L\sp 2$} curvature and volume renormalization of {AHE} metrics on
  4-manifolds.
\newblock {\em Math. Res. Lett.}, 8(1-2):171--188, 2001.

\bibitem{Biq00}
Olivier Biquard.
\newblock M\'etriques d'{E}instein asymptotiquement sym\'etriques.
\newblock {\em Ast\'erisque}, (265):vi+109, 2000.

\bibitem{Biq05}
Olivier Biquard.
\newblock Autodual {E}instein versus {K}\"ahler-{E}instein.
\newblock {\em Geom. Funct. Anal.}, 15(3):598--633, 2005.

\bibitem{BiqRolX06}
Olivier Biquard and Yann Rollin.
\newblock Wormholes in {ACH} {E}instein manifolds.
\newblock 2006.

\bibitem{CalSin04}
David M.~J. Calderbank and Michael~A. Singer.
\newblock Einstein metrics and complex singularities.
\newblock {\em Invent. Math.}, 156(2):405--443, 2004.

\bibitem{Her07}
Marc Herzlich.
\newblock A remark on renormalized volume and {E}uler characteristic for {ACHE}
  4-manifolds.
\newblock {\em Differential Geom. Appl.}, 25(1):78--91, 2007.

\bibitem{KroMro97}
P.~B. Kronheimer and T.~S. Mrowka.
\newblock Monopoles and contact structures.
\newblock {\em Invent. Math.}, 130(2):209--255, 1997.

\bibitem{Rol02CR}
Yann Rollin.
\newblock Rigidit\'e d'{E}instein du plan hyperbolique complexe [{E}instein
  rigidity of the complex hyperbolic plane].
\newblock {\em C. R. Math. Acad. Sci. Paris}, 334(8):671--676, 2002.

\bibitem{Rol04}
Yann Rollin.
\newblock Rigidit\'e d'{E}instein du plan hyperbolique complexe.
\newblock {\em J. Reine Angew. Math.}, 567:175--213, 2004.

\end{thebibliography}

\end{document}